\documentclass[a4wide,12pt]{article}
\usepackage[english]{babel}
\usepackage[sectionbib]{natbib}
\usepackage{amsmath}
\usepackage{amsfonts}
\usepackage{amsthm}
\usepackage{amssymb}
\usepackage{graphicx}
\usepackage{dsfont}
\def\sym{\fam\comfam\com}
\font\tensym=msbm10 \font\sevensym=msbm7 \font\fivesym=msbm5

\newfam\symfam
\textfont\symfam=\tensym \scriptfont\symfam=\sevensym
\scriptscriptfont\symfam=\fivesym
\def\sym{\fam\symfam\relax}

\def\R{{\sym R}}

\newtheorem{lemma}{Lemma}

\newtheorem{theorem}{Theorem}

\usepackage{epstopdf}
\usepackage[top=3cm , bottom=3.5cm , left=2.5cm ,right=2.5cm ]{geometry}
\usepackage{float}
\usepackage{authblk}

\def\Cc {\mathcal{C}}

\newcommand{\be}{\begin{eqnarray*}}
    \newcommand{\ee}{\end{eqnarray*}}
\newcommand{\ben}{\begin{eqnarray}}
\newcommand{\een}{\end{eqnarray}}
\begin{document}
\renewcommand{\thefootnote}{\arabic{footnote}}

    \begin{center}
        {\Large \textbf{Optimal Berry-Ess\'een    bound for Maximum likelihood estimation of the drift parameter in
         $ \alpha $-Brownian bridge}} \\[0pt]
        ~\\[0pt]
        Khalifa Es-Sebaiy\footnote{ Department of Mathematics, Faculty of Science, Kuwait University, Kuwait City,  Kuwait.
            E-mail: \texttt{khalifa.essebaiy@ku.edu.kw}\\ *Corresponding author}* and
            Jabrane Moustaaid\footnote{ National School of Applied Sciences, Cadi Ayyad University,
            Marrakesh, Morocco. E-mail: \texttt{jabrane.mst@gmail.com}}
        \\[0pt]~\\[0pt]
        \textit{  Kuwait University   and Cadi Ayyad University  }\\[0pt]
        ~\\[0pt]
    \end{center}
    \begin{abstract}

    Let $T>0,\alpha>\frac12$. In the present paper we consider  the $\alpha$-Brownian
      bridge defined as  $dX_t=-\alpha\frac{X_t}{T-t}dt+dW_t,~ 0\leq t< T$,  where $W$ is a standard Brownian motion.
      We  investigate   the optimal rate of convergence  to normality of
     the maximum likelihood estimator (MLE)  for the parameter $ \alpha $ based on the continuous observation
$\{X_s,0\leq s\leq t\}$ as $t\uparrow T$. We prove that an optimal
rate of Kolmogorov distance for central limit theorem on the MLE  is
given by $\frac{1}{\sqrt{\vert\log(T-t)\vert}}$, as $t\uparrow T$.
First we compute an upper bound and then find a lower bound with the
same speed using Corollary 1  and Corollary 2 of \cite{kp-JVA},
respectively.
        \end{abstract}

   \noindent {\bf Keywords:}  $\alpha$-Brownian bridge, rate of convergence, MLE, Kolmogorov distance, Malliavin calculus.

   \noindent  {\bf 2020 Mathematics Subject Classification:}  60F05; 60H07; 62F12.

    \section{Introduction}\label{Sec}
    Fix a time interval $ \left[0,T \right)  $, with $T$ is a positive real number. We consider the
     $ \alpha $-Brownian bridge process $ X:=\lbrace X_t,~t\in\left[0,T \right)  \rbrace$, defined as the  solution to  the stochastic  differential equation
    \begin{eqnarray}
X_0=0;~~ dX_t=-\alpha\frac{X_t}{T-t}dt+dW_t,~~~ 0\leq t<
T,\label{bridge}
\end{eqnarray} where $W$ is a standard Brownian
motion, and   $ \alpha>0 $  is unknown parameter to be estimated.

In recent years, the study of various problems related to the
 $\alpha$-Brownian bridge (\ref{bridge})
has attracted interest. The process (\ref{bridge}) has been first
considered by \cite{BS}, where it is used to describe the evolution
of the simple arbitrage opportunity associated with a given futures
contract in the absence of transaction costs. For more information
and further references concerning the subject, we refer the reader
to    \cite{BP}, as well as \cite{Mansuy} and \cite{GT}.

An example of interesting problem related to $X$ is the statistical
estimation of $\alpha$ when one observes the whole trajectory of
$X$. A natural candidate  is the maximum likelihood estimator (MLE),
which can be easily computed for this model, due to the specific
form of (\ref{bridge}): one gets
\begin{equation}\label{MLE}
\widetilde{\alpha}_t=
-\left(\int_0^t\frac{X_u}{T-u}\,dX_u\right)\bigg/\left(\int_0^t\frac{X^2_u}{(T-u)^2}\,du\right)
,\quad  t<T.
\end{equation}
In (\ref{MLE}), the integral with respect to $X$ must of course be
understood in the It\^o sense. Moreover, it is easy to find
   \begin{eqnarray}\label{MLE2}
\alpha - \widetilde{\alpha}_t &= &
\left(\int_0^t\frac{X_u}{T-u}\,dW_u\right)\bigg/\left(\int_0^t\frac{X^2_u}{(T-u)^2}\,du\right).
\end{eqnarray}
 The asymptotic behavior of the MLE
$\widetilde{\alpha}_t$ of $\alpha$ based on the observation
$\{X_s,0\leq s\leq t\}$ as $t\uparrow T$ has been studied in
\cite{BP}.  Let us describe what is known about this problem: as
$t\uparrow  T$,
\begin{itemize}
\item[\textbullet] if $ \alpha>0 $, the MLE $\widetilde{\alpha}_t$   is strongly consistent, that is,
$\widetilde{\alpha}_t\longrightarrow\alpha$ almost surely, see
[\cite{BP}, Theorem 16];
\item[\textbullet] if $ 0<\alpha<\frac{1}{2} $\be
\left( T-t\right)^{\alpha-\frac{1}{2}}\left(
\alpha-\widetilde{\alpha}_t\right) {\overset{law}{\longrightarrow}}
~T^{\alpha-\frac{1}{2}}\left( 1-2\alpha\right)\times \Cc(1) \ee with
$\Cc(1)$ the standard Cauchy admitting a density function
$\pi^{-1}(1+x^2)^{-1},\ x \in \R$, see [\cite{BP},  Theorem 7];
\item[\textbullet] if $ \alpha=\frac{1}{2} $\be
\left|\log\left(T-t\right) \right| \left(
\alpha-\widetilde{\alpha}_t\right) {\overset{law}{\longrightarrow}}~
\frac{\displaystyle \int_0^TW_sdW_s}{\displaystyle \int_0^TW^2_sds},
\ee see [\cite{BP}, Theorem 5];
\item[\textbullet] if $ \alpha>\frac{1}{2} $
\be \sqrt{\left|\log\left(T-t\right)\right|}\left(
\alpha-\widetilde{\alpha}_t\right) {\overset{law}{\longrightarrow}}~
\mathcal{N}\left( 0,2\alpha-1\right), \ee see [\cite{BP}, Theorem
10].
\end{itemize}
The study of the asymptotic distribution of an estimator is not very
useful in general for practical purposes unless the rate of
convergence is known. To our knowledge, no result of the
Berry-Ess\'een type is known for the distribution of the MLE
$\widetilde{\alpha}_t$  of the drift parameter $\alpha$ of the
$\alpha$-Brownian bridge (\ref{bridge}). The aim of the present work
is to provide, when $ \alpha>\frac{1}{2} $,  an optimal rate of
Kolmogorov distance for central limit theorem of the MLE
$\widetilde{\alpha}_t$ in the following sense: There exist constants
$0 < c < C < \infty$, depending only on $\alpha$ and $T$, such that
for all $t$ sufficiently near $T$,
\begin{eqnarray*}
\frac{c}{\sqrt{\vert\log\left( T-t\right)\vert}}\leq\sup_{z\in
\mathbb{R}}\left\vert \mathbb{P}\left(\sqrt{\frac{\vert\log\left(
T-t\right)\vert}{2\alpha-1}}\left(
\alpha-\widetilde{\alpha}_t\right)\leq z\right)-\mathbb{P}\left(
Z\leq z\right)\right\vert\leq \frac{C}{\sqrt{\vert\log\left(
T-t\right)\vert}},
\end{eqnarray*}
where  $Z$ denotes a standard normal random variable.

Let us recall the case of the Ornstein-Uhlenbeck process defined as
solution to the equation $dX_t=-\theta X_t dt + dW_t,\ t\geq0,\
X_0=0$, with $\theta>0$. While \cite{bishwal} obtained the upper
bound $O(1/\sqrt{T})$ in Kolmogorov distance for normal
approximation of the MLE of the drift parameter $\theta$ on the
basis of continuous observation of the process ${X_t}$ on the time
interval $[0,T]$,     a lower bound with the same speed has been
recently obtained by \cite{KP-KJSS}.  This means that
$O(1/\sqrt{T})$ is an optimal Berry-Ess\'een  bound for the MLE of
$\theta$. Finally, we mention that \cite{EN} studied the parameter
estimation for so-called $\alpha$-fractional bridge which is given
by the equation (\ref{bridge}) replacing the standard Brownian
motion $W$ by a fractional Brownian motion.\\

We proceed as follows. In Section 2 we give the basic tools of
Malliavin calculus needed throughout the paper. Section 3 contains
our main result, concerning  the optimal rate of convergence to
normality of the MLE $\widetilde{\alpha}_t$.

\section{Preliminaries}
In this section, we recall some elements from stochastic analysis
that we will need in the paper. See \cite{NP-book}, and
\cite{nualart-book} for details. Any real, separable Hilbert space
${\mathfrak{H}}$ gives
rise to an isonormal Gaussian process: a centered Gaussian family $%
(G(\varphi ),\varphi \in {\mathfrak{H}})$ of random variables on a
probability space $(\Omega ,\mathcal{F},\mathbb{P})$ such that $\mathbf{E}%
(G(\varphi )G(\psi ))=\langle \varphi ,\psi \rangle
_{{\mathfrak{H}}}$. In this paper, it is enough to use the classical
Wiener space, where $\mathfrak{H}=L^{2}([0,T])$, though any
$\mathfrak{H}$ will also work. In the case
${\mathfrak{H}}=L^{2}([0,T])$, $G$ can be identified with the
stochastic differential of a Wiener process $\{W_t,t\in[0,T]\}$ and
one interprets $G(\varphi ):=\int_{0}^{T}\varphi \left( s\right)
dW\left( s\right) $.

The Wiener chaos of order $p$, denoted by $\mathfrak{H}_p$, is
defined as the closure in $L^{2}\left( \Omega \right) $ of the
linear span of the random variables $H_{p}(G(\varphi ))$, where
$\varphi \in {\mathfrak{H}},\Vert \varphi \Vert _{{\mathfrak{H}}}=1$
and $H_{p}$ is the Hermite polynomial of degree $p$.
 The multiple Wiener stochastic integral
$I_{p}$ with respect to $G\equiv W$, of order $p$ is an isometry
between the Hilbert space ${\mathfrak{H}}^{\odot p}= L^2_{sym}\left(
\left[0,T \right]^p\right)$ (symmetric tensor product) equipped with
the scaled norm $\sqrt{p!}\Vert \cdot \Vert
_{{\mathfrak{H}}^{\otimes p}}$ and the Wiener chaos of order $p$
under $L^{2}\left( \Omega \right) $'s norm, that is, the multiple
Wiener stochastic integral of order $p$:
\[I_p : \left({\mathfrak{H}}^{\odot p},\sqrt{p!}\Vert \cdot \Vert
_{{\mathfrak{H}}^{\otimes p}}\right)\longrightarrow
\left(\mathfrak{H}_p,L^{2}\left( \Omega \right) \right)\]
is a linear isometry defined by $I_p(f^{\otimes p}) = H_p(G(f))$.\\

\noindent $\bullet $ \textbf{Multiple Wiener-It\^o integral.} If
$f\in L^2\left( \left[0,T \right]^p\right)$ is symmetric, we can
also rewrite $I_p( f )$ as the following iterated adapted It\^o
stochastic integral: \begin{eqnarray} I_p(f)&=&\displaystyle
\int_{\left[0,T \right]^p}f(t_1,\ldots ,t_p)dW_{t_1}\ldots
dW_{t_p}\nonumber\\
&=&p!\displaystyle \int_0^TdW_{t_1}\displaystyle
\int_0^{t_1}dW_{t_2}\ldots \displaystyle
\int_0^{t_{p-1}}dW_{t_p}f(t_1,\ldots ,t_p).\label{iterated-integral}
\end{eqnarray}

\noindent $\bullet $ \textbf{The Wiener chaos expansion.} For any
$F\in
L^{2}\left( \Omega \right) $, there exists a unique sequence of functions $%
f_{p}\in {\mathfrak{H}}^{\odot p}$ such that
\begin{equation*}
F=\mathbf{E}[F]+\sum_{p=1}^{\infty }I_{p}(f_{p}),
\end{equation*}%
where the terms are all mutually orthogonal in $L^{2}\left( \Omega
\right) $ and
\begin{equation*}
\mathbf{E}\left[ I_{p}(f_{p})^{2}\right] =p!\Vert f_{p}\Vert
_{{\mathfrak{H}}^{\otimes p}}^{2}.
\end{equation*}

\noindent $\bullet $ \textbf{Product formula and contractions.}  For
any integers $p$, $q \geq 1$ and
symmetric integrands $f\in \mathfrak{H}^{\odot p}$ and $g\in \mathfrak{H}%
^{\odot q}$,
\begin{equation}
I_{p}(f)I_{q}(g)=\sum_{r=0}^{p\wedge q}r!\binom{p}{r} \binom{q}{r}
I_{p+q-2r}(f\widetilde\otimes_r g);  \label{product-formula}
\end{equation}%
where $%
f\otimes _{r}g$  is the contraction of order $r$ of $f$ and $g$
which is an element of ${\mathfrak{H}}^{\otimes (p+q-2r)}$ defined
by
\begin{eqnarray*}
    &&(f\otimes _{r}g)(s_{1},\ldots ,s_{p-r},t_{1},\ldots ,t_{q-r}) \\
    &&\qquad :=\int_{[0,T]^{p+q-2r}}f(s_{1},\ldots ,s_{p-r},u_{1},\ldots
    ,u_{r})g(t_{1},\ldots ,t_{q-r},u_{1},\ldots ,u_{r})\,du_{1}\cdots
    du_{r},
\end{eqnarray*}
 while $f\widetilde\otimes_r g$ denotes its symmetrization. More generally the symmetrization $\tilde{f}$ of a function $f$ is defined by
  $\tilde{f}(x_{1},\ldots,x_{p}) = \frac{1}{p!} \sum\limits_{\sigma} f(x_{\sigma(1)},...,x_{\sigma(p)})$ where the sum runs over all permutations $\sigma$ of $ \{1,...,p\}$.\\

\noindent $\bullet $ \textbf{Kolmogorov distance between random
variables}. If $X,Y$ are two real-valued random variables, then the
Kolmogorov distance  between the law of $X$ and the law of $Y$ is
given by
\begin{equation*}
d_{Kol}\left( X,Y\right):= \sup_{z\in \mathbb{R}}\left\vert
\mathbb{P}\left(X\leq z\right)-\mathbb{P}\left( Y\leq
z\right)\right\vert.
\end{equation*}

\noindent $\bullet $ \textbf{Optimal Berry-Ess\'een bound for the
CLT of  $\boldsymbol{F_t/G_t}$}. Let $Z$ denote the standard normal
law. Recently, using techniques relied on the combination of
Malliavin calculus and Stein's method (see, e.g., \cite{NP-book}),
the following observation  provided lower and upper bounds of the
Kolmogrov distance for the Central Limit Theorem (CLT) of $F_t/G_t$,
where $F_t$ and $G_t$ are functionals of Gaussian fields.

 Fix $T>0$. Let $f_t,g_t\in \mathfrak{H}^{\odot 2}$ for all
$t\in[0,T)$, and let  $b_t$ be a positive function of $t$ such that
$I_2(g_t)+b_t>0$ almost surely for all $t\in[0,T)$.
\begin{itemize}
\item[(a)] If $\max_{i=1,2,3}\psi_i(t)\rightarrow 0$ as $t\uparrow
T$, where for every $t\in[0,T)$,
\begin{eqnarray*}
\psi_1(t)&:=&\frac{1}{b_t^2}\sqrt{\left(b^2_t-2\Vert f_t\Vert^2_{\mathfrak{H}^{\otimes 2}}\right)^2+8\Vert f_t\otimes_1 f_t\Vert^2_{\mathfrak{H}^{\otimes 2}}},\\
\psi_2(t)&:=&\frac{2}{b_t^2}\sqrt{2 \Vert f_t \otimes_1
g_t\Vert_{\mathfrak{H}^{\otimes2}}+\langle
f_t,g_t\rangle^2_{\mathfrak{H}^{\otimes 2}}},\\
\psi_3(t)&:=&\frac{2}{b_t^2}\sqrt{\Vert
g_t\Vert^4_{\mathfrak{H}^{\otimes 2}} +2\Vert g_t\otimes_1
g_t\Vert^2_{\mathfrak{H}^{\otimes 2}}},
\end{eqnarray*}
then (see Corollary 1   in \cite{kp-JVA}), there exists a positive
constant $C$ such that for all $t$ sufficiently near $T$,
\begin{equation}
\sup_{z\in \mathbb{R}}\left\vert \mathbb{P}\left(\frac{I_2(f_t)}{
I_2(g_t)+b_t}\leq z\right)-\mathbb{P}\left( Z\leq
z\right)\right\vert\leq C  \max_{i=1,2,3} \psi_i(t). \label{kp}
\end{equation}

\item[(b)]If, as $t\uparrow T$,
\begin{eqnarray*}
\Vert f_t\otimes_1 f_t\Vert_{\mathfrak{H}^{\otimes
2}}\longrightarrow0,\qquad
 \frac{2\Vert f_t\Vert^2_{\mathfrak{H}^{\otimes
2}}-1}{\langle f_t\otimes_1 f_t,f_t\rangle_{\mathfrak{H}^{\otimes
2}}}\longrightarrow0,\ \mbox{ and }\ \frac{\Vert f_t\otimes_1
f_t\Vert_{\mathfrak{H}^{\otimes 2}}}{\langle f_t\otimes_1
f_t,f_t\rangle_{\mathfrak{H}^{\otimes 2}}}\longrightarrow \rho\neq0,
\end{eqnarray*}
 then  (see Corollary 2   in \cite{kp-JVA}), there exists a positive constant $c$ such that for all $t$
sufficiently near $T$,
\begin{equation}
\sup_{z\in \mathbb{R}}\left\vert \mathbb{P}\left(\frac{I_2(f_t)}{
I_2(g_t)+b_t}\leq z\right)-\mathbb{P}\left( Z\leq
z\right)\right\vert\geq c \vert \langle f_t\otimes_1
f_t,f_t\rangle_{\mathfrak{H}^{\otimes 2}}\vert. \label{kp2}
\end{equation}
\end{itemize}

Throughout the paper $Z$ denotes a standard normal random variable.
Also,  $ C$ denotes a generic positive constant (perhaps depending
on $\alpha$ and $T$, but not on anything else), which may change
from line to line.

\section{Optimal rate of convergence of the MLE}
In this section we consider the problem of optimal rate of
convergence to normality of the MLE $\widetilde{\alpha}_t $ given in
(\ref{MLE}). More precisely,  we want to provide an optimal
Berry-Ess\'een bound in the Kolmogorov distance for
$\widetilde{\alpha}_t$. In what follows we suppose that
$\alpha>\frac12$.

To proceed, let us start with useful notations  needed in what
follows. Because (\ref{bridge}) is linear, it is immediate to solve
it explicitly; one then gets the following formula: \ben X_t=\left(
T-t\right)^{\alpha} \displaystyle \int_0^t\left(
T-s\right)^{-\alpha} dW_s,~~~0\leq t< T.\label{SOL} \een
 Define for every $t\in[0,T)$,
 \begin{eqnarray}\lambda_t:=\frac{\vert\log\left(T- t
\right)\vert}{2\alpha-1}.\label{lambda}\end{eqnarray}
 It follows  from  \eqref{iterated-integral} and \eqref{SOL} that for every $t\in[0,T)$,
\begin{eqnarray} \sqrt{\frac{2\alpha-1}{\vert\log\left(T- t
\right)\vert}}\displaystyle
\int_0^t\frac{X_s}{T-s}dW_s&=&\frac{1}{\sqrt{\lambda_t}}\displaystyle
\int_0^t\frac{X_s}{T-s}dW_s\nonumber\\&=&\frac{1}{\sqrt{\lambda_t}}\displaystyle
\int_0^t\displaystyle \int_0^s\left( T-s\right)^{\alpha-1} \left(
T-r\right)^{-\alpha} dW_rdW_s\nonumber\\&=&
\frac{1}{2\sqrt{\lambda_t}}\displaystyle \int_0^t\displaystyle
\int_0^t\left( T-s\vee r\right)^{\alpha-1} \left( T-s\wedge
r\right)^{-\alpha} dW_rdW_s\nonumber\\&=:&
I_2\left(f_t\right),\label{numerator}
\end{eqnarray}
where $f_t$ is a symmetric function defined by
\begin{eqnarray}
    f_t(u,v)=\frac{1}{2\sqrt{\lambda_t}}\left(T-u \vee v \right)^{\alpha-1}
    \left(T-u \wedge v \right)^{-\alpha} \mathds{1}_{\left[ 0,t\right]^2 }
    (u,v).\label{kernel-f}
    \end{eqnarray}
On the other hand, using \eqref{product-formula} and \eqref{SOL}, we
can write
\begin{eqnarray*}
X_s^2&=&\left[I_1\left(\left( T-s\right)^{\alpha}\left(
T-u\right)^{-\alpha}\mathds{1}_{\left[ 0,s\right] }
(u)\right)\right]^2\\
&=&I_2\left(\left( T-s\right)^{2\alpha}\left(
T-u\right)^{-\alpha}\left( T-v\right)^{-\alpha}\mathds{1}_{\left[
0,s\right]^2 } (u,v)\right)+ \int_0^s\left(
T-s\right)^{2\alpha}\left( T-u\right)^{-2\alpha}du.
\end{eqnarray*}
Hence, we have
\begin{eqnarray}  \frac{1}{\lambda_t}\int_0^t\frac{X^2_s}{\left(
T-s\right) ^2}ds  = I_2(g_t)+b_t,\label{denominator}
\end{eqnarray}
where
\begin{eqnarray}
g_t(u,v)&=& \frac{1}{\lambda_t}\int_0^t\left(
T-s\right)^{2\alpha-2}\left( T-u\right)^{-\alpha}\left(
T-v\right)^{-\alpha} \mathds{1}_{\left[ 0,s\right]^2 }(u,v)
ds\nonumber\\&=& \frac{1}{\lambda_t}\left(
T-u\right)^{-\alpha}\left( T-v\right)^{-\alpha}\mathds{1}_{\left[
0,t\right]^2 }(u,v)\int_{u \vee v}^t\left( T-s\right)^{2\alpha-2}
 ds\nonumber\\&=&
\frac{\left( T-u\right)^{-\alpha}\left(
T-v\right)^{-\alpha}}{\vert\log\left(T- t \right)\vert}\left[\left(
T-u \vee v\right)^{2\alpha-1}-\left( T-t\right)^{2\alpha-1}\right]
\mathds{1}_{\left[ 0,t\right]^2 }(u,v),\label{kernel-g}
\end{eqnarray}
and
\begin{eqnarray}
b_t&=& \frac{1}{\lambda_t}\int_0^t\int_0^s\left(
T-s\right)^{2\alpha-2}\left(
T-u\right)^{-2\alpha}duds\nonumber\\
&=&\frac{1}{\vert\log\left(T- t \right)\vert} \int_0^t \left(
T-s\right)^{-1}-T^{1-2\alpha}\left(
T-s\right)^{2\alpha-2}ds\nonumber\\
&=&1+\frac{\log(T)}{\vert\log\left(T- t
\right)\vert}-\frac{1}{(2\alpha-1)\vert\log\left(T- t \right)\vert}
\left(1-  \left(
\frac{T-t}{T}\right)^{2\alpha-1}\right).\label{exp-b}
\end{eqnarray}
Therefore, combining (\ref{MLE2}), (\ref{numerator}) and
(\ref{denominator}), we can write
 \begin{eqnarray}
\sqrt{\lambda_t}\left(\alpha-\widetilde{\alpha}_t\right)=
\frac{I_2(f_t)}{I_2(g_t)+b_t}, \label{MLE-fraction}\end{eqnarray}
where $\lambda_t$, $f_t$, $g_t$ and $b_t$ are given in
(\ref{lambda}), (\ref{kernel-f}), (\ref{kernel-g}) and
(\ref{exp-b}), respectively.\\

In order to prove our main result we make use of the following
technical lemmas.
\begin{lemma} Suppose that $\alpha>\frac12$.
Let  $\lambda_t$, $f_t$ and $b_t$  be the functions  given by
(\ref{lambda}), (\ref{kernel-f}) and (\ref{exp-b}), respectively.
Then  we have,  as $t\uparrow T$,
\begin{eqnarray}
2\Vert f_t\Vert_{\mathfrak{H}^{\otimes2}}^2-1&=&
\frac{\log(T)}{\left( 2\alpha-1\right)\lambda_t}-\frac{1}{\left(
2\alpha-1\right)^2\lambda_t} +
o\left(\frac{1}{\lambda_t}\right),\label{equ-f1}
\end{eqnarray}
\begin{eqnarray}
b_t^2-1&=& \frac{2\log(T)}{\left(
2\alpha-1\right)\lambda_t}-\frac{2}{\left(
2\alpha-1\right)^2\lambda_t} +
o\left(\frac{1}{\lambda_t}\right),\label{equ-f2}
\end{eqnarray}
\begin{eqnarray}
\langle f_t\otimes_1 f_t,f_t\rangle_{\mathfrak{H}^{\otimes
2}}=\frac{3}{4(2\alpha-1)\sqrt{\lambda_t}}+o\left(\frac{1}{\sqrt{\lambda_t}}\right),\label{equ-f3}
\end{eqnarray}
\begin{eqnarray}
\Vert  f_t \otimes_{1}   f_t \Vert_{\mathfrak{H}^{\otimes2}}^2 &=&
\frac{5}{4(2\alpha-1)^2\lambda_t} +
o\left(\frac{1}{\lambda_t}\right),\label{equ-f4}
\end{eqnarray}
where the notation $o(1/\lambda_t^{\beta})$ means that
$\lambda_t^{\beta}o(1/\lambda_t^{\beta})\longrightarrow0$ as
$t\uparrow T$.
\end{lemma}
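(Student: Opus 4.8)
The plan is to evaluate each of the four asymptotic expansions by reducing every quantity to an explicit double or single integral involving powers of $(T-u)$, $(T-v)$ and $(T-s)$, changing variables to $x=T-u$, etc., and then extracting the leading term in powers of $\lambda_t$ (equivalently, in $|\log(T-t)|$). Throughout, set $\epsilon=T-t$, so $\lambda_t=|\log\epsilon|/(2\alpha-1)=-\log\epsilon/(2\alpha-1)\to\infty$ as $t\uparrow T$.

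First I would compute $\|f_t\|^2_{\mathfrak{H}^{\otimes 2}}$. By definition,
\begin{eqnarray*}
\|f_t\|^2_{\mathfrak{H}^{\otimes 2}}=\int_{[0,t]^2}f_t(u,v)^2\,du\,dv
=\frac{2}{4\lambda_t}\int_0^t\int_0^s (T-s)^{2\alpha-2}(T-r)^{-2\alpha}\,dr\,ds,
\end{eqnarray*}
after symmetrizing and using $u\vee v = s$, $u\wedge v=r$ with $r<s$. But this last double integral is exactly $\lambda_t b_t$ as computed in \eqref{exp-b} (compare the expression for $b_t$). Hence $2\|f_t\|^2_{\mathfrak{H}^{\otimes2}}=b_t$, and \eqref{equ-f1} follows from the exact formula for $b_t$ in \eqref{exp-b} by Taylor-expanding $\left(\frac{T-t}{T}\right)^{2\alpha-1}=e^{(2\alpha-1)\log(\epsilon/T)}\to 0$, which gives $2\|f_t\|^2-1 = \frac{\log T}{(2\alpha-1)\lambda_t}-\frac{1}{(2\alpha-1)^2\lambda_t}+o(1/\lambda_t)$. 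Similarly \eqref{equ-f2}: since $b_t=1+\frac{\log T}{|\log\epsilon|}-\frac{1}{(2\alpha-1)|\log\epsilon|}(1-(\epsilon/T)^{2\alpha-1})$, squaring gives $b_t^2-1=2(b_t-1)+(b_t-1)^2$, and $(b_t-1)^2=O(1/\lambda_t^2)=o(1/\lambda_t)$, while $2(b_t-1)=\frac{2\log T}{(2\alpha-1)\lambda_t}-\frac{2}{(2\alpha-1)^2\lambda_t}+o(1/\lambda_t)$ after replacing $1/|\log\epsilon|=1/((2\alpha-1)\lambda_t)$ and dropping the $(\epsilon/T)^{2\alpha-1}$ term. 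These two are routine once the identity $2\|f_t\|^2=b_t$ is noticed.

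Next, for \eqref{equ-f3} and \eqref{equ-f4} I would compute the contraction $f_t\otimes_1 f_t$ explicitly. Writing $\phi(u,v):=(T-u\vee v)^{\alpha-1}(T-u\wedge v)^{-\alpha}$ so that $f_t=\frac{1}{2\sqrt{\lambda_t}}\phi\,\mathds{1}_{[0,t]^2}$, one has
\begin{eqnarray*}
(f_t\otimes_1 f_t)(u,v)=\frac{1}{4\lambda_t}\int_0^t \phi(u,w)\phi(w,v)\,dw,
\end{eqnarray*}
and then $\|f_t\otimes_1 f_t\|^2 = \frac{1}{16\lambda_t^2}\int_{[0,t]^4}\phi(u,w)\phi(w,v)\phi(u,w')\phi(w',v)\,du\,dv\,dw\,dw'$, while $\langle f_t\otimes_1 f_t, f_t\rangle = \frac{1}{8\lambda_t^{3/2}}\int_{[0,t]^3}\phi(u,w)\phi(w,v)\phi(u,v)\,du\,dv\,dw$. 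The key structural fact is that $\int_0^t \phi(u,w)\phi(w,v)\,dw$, when $u<v$ say, splits into ranges $w<u$, $u<w<v$, $w>v$, on each of which all the min/max resolve and the $w$-integral is an elementary power integral; the dominant contributions come from the regions where a factor $(T-w)^{-1}$ appears, producing a $\log$ (hence a factor $|\log\epsilon|$, i.e.\ $\lambda_t$) upon integration. After the substitution $x=T-u$, $y=T-v$, $z=T-w$ on $[\epsilon,T]$, each integral becomes a product of a logarithmically-divergent factor $\sim|\log\epsilon|$ times a convergent constant, so that the triple integral is $\sim c_3|\log\epsilon|\cdot(2\alpha-1)^{-1}$-type quantity giving, after dividing by $8\lambda_t^{3/2}$ and using $|\log\epsilon|=(2\alpha-1)\lambda_t$, the stated $\frac{3}{4(2\alpha-1)\sqrt{\lambda_t}}$; and the quadruple integral is $\sim c_4|\log\epsilon|$ giving $\frac{5}{4(2\alpha-1)^2\lambda_t}$. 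The constants $3/4$ and $5/4$ come out of the convergent spatial integrals of products of $x^{\alpha-1}y^{-\alpha}$-type kernels.

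The main obstacle is the bookkeeping in \eqref{equ-f3} and \eqref{equ-f4}: correctly identifying which of the several regions (after splitting on the orderings of $u,v,w,w'$ relative to each other) contribute at the leading order $\lambda_t^{-1/2}$ respectively $\lambda_t^{-1}$, and which are lower order, then summing the resulting elementary integrals to get exactly the rational constants $3/4$ and $5/4$. The technique is to localize: the only source of a $\log$ is an integration of $(T-w)^{-1}$ over an interval whose left endpoint is $\epsilon$; every such integration contributes one factor of $|\log\epsilon|$, and since in \eqref{equ-f4} we need one net factor of $|\log\epsilon|$ (to beat the $\lambda_t^{-2}$ prefactor down to $\lambda_t^{-1}$) we keep only terms with exactly one logarithmic $w$- or $w'$-integration, the remaining integrations being taken over the whole fixed interval $[\epsilon,T]\approx[0,T]$ and yielding convergent constants. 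I would present this as: (i) expand the contraction kernel region-by-region; (ii) discard all contributions that are $o$ of the claimed order by a power-counting lemma on the number of $\log$-producing integrations; (iii) evaluate the surviving elementary integrals and collect constants. Equations \eqref{equ-f1}–\eqref{equ-f2} are immediate corollaries of the exact formula for $b_t$ together with the identity $2\|f_t\|^2_{\mathfrak{H}^{\otimes 2}}=b_t$.
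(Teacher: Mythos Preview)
Your observation that $2\Vert f_t\Vert_{\mathfrak{H}^{\otimes2}}^2=b_t$ is correct and is a genuine shortcut: the paper computes the two quantities separately and never notices this identity, so \eqref{equ-f1} and \eqref{equ-f2} in your scheme are literally the same statement read off the exact formula \eqref{exp-b}. That is cleaner than what the paper does.

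For \eqref{equ-f3} and \eqref{equ-f4}, however, what you have written is a plan rather than a proof: you correctly identify that the computation reduces to splitting the ordered regions of $[0,t]^3$ and $[0,t]^4$, isolating the pieces where a factor $(T-\cdot)^{-1}$ produces a $|\log\epsilon|$, and discarding the rest; but you stop short of obtaining the constants $3/4$ and $5/4$, and those constants are the content of the lemma. Your suggestion to first compute the kernel $(f_t\otimes_1 f_t)(u,v)$ piecewise and then integrate is workable but heavier than necessary. The paper's route is tidier: for \eqref{equ-f3} one uses that $f_t(x_1,x_2)f_t(x_2,x_3)f_t(x_3,x_1)$ is fully symmetric, so the integral over $[0,t]^3$ equals $3!$ times the integral over the single simplex $\{x_1<x_2<x_3\}$, on which all the $\vee,\wedge$ resolve and a three-step iterated integration gives the leading term $\frac{3}{4(2\alpha-1)\sqrt{\lambda_t}}$ directly. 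For \eqref{equ-f4} the cyclic product $f_t(x_1,x_2)f_t(x_2,x_3)f_t(x_3,x_4)f_t(x_4,x_1)$ has dihedral symmetry, and the $24$ orderings collapse into just two types with multiplicities $16$ and $8$: the ``nested'' type $\{x_1<x_2<x_3<x_4\}$ giving integrand $(T-x_4)^{2\alpha-2}(T-x_3)^{-1}(T-x_2)^{-1}(T-x_1)^{-2\alpha}$, and the ``crossed'' type $\{x_1<x_3<x_2<x_4\}$ giving $(T-x_4)^{2\alpha-2}(T-x_2)^{2\alpha-2}(T-x_3)^{-2\alpha}(T-x_1)^{-2\alpha}$. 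Four successive elementary integrations on each give leading terms $\frac{1}{(2\alpha-1)^2\lambda_t}$ and $\frac{1}{4(2\alpha-1)^2\lambda_t}$ respectively, summing to $\frac{5}{4(2\alpha-1)^2\lambda_t}$. This symmetry reduction \emph{is} the bookkeeping you flag as the main obstacle, and without it (or an equivalent explicit calculation) the proof is incomplete.
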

\begin{proof}
Suppose that $0<T-t<1$. So, $-\log(T-t)=\vert\log(T-t)\vert$. We
also  notice that  $\lambda_t\longrightarrow\infty$ as $t\uparrow
T$. Since the function $f_t$ is symmetric, we have
\begin{eqnarray*}
\Vert f_t\Vert_{\mathfrak{H}^{\otimes2}}^2&=&\frac{1}{4\lambda_t}
\int_{\left[0,t \right] ^2}\left(T-x \vee y
\right)^{2\alpha-2}\left(T-x \wedge y \right)^{-2\alpha}dxdy
\\&=& \frac{1}{2\lambda_t}\int_0^tdy\displaystyle \int_0^y\left(T-
y \right)^{2\alpha-2}\left(T-x\right)^{-2\alpha}dx \\&=&
 \frac{1}{2(2\alpha-1)\lambda_t}
\int_0^t\left(\left(T- y \right)^{-1}- T^{-2\alpha+1}\left(T- y
\right)^{2\alpha-2}\right)dy \\&=& \frac{1}{2\vert\log\left(T- t
\right)\vert}\left( \log(T)-\log\left(T- t \right)+\frac{\left(
(T-t)/T\right)^{2\alpha-1} }{\left( 2\alpha-1\right)
}-\frac{1}{\left( 2\alpha-1\right) }\right)
\\&=&
\frac{1}{2}\left(1+ \frac{\log(T)}{\vert\log\left(T- t
\right)\vert}+\frac{\left( (T-t)/T\right)^{2\alpha-1} }{\left(
2\alpha-1\right)\vert\log\left(T- t \right)\vert}-\frac{1}{\left(
2\alpha-1\right)\vert\log\left(T- t \right)\vert}\right)\\&=&
\frac{1}{2}\left(1+  \frac{\log(T)}{\left(
2\alpha-1\right)\lambda_t}-\frac{1}{\left(
2\alpha-1\right)^2\lambda_t}+\frac{\left(T-t\right)^{2\alpha-1}
}{\left( 2\alpha-1\right)^2T^{2\alpha-1}\lambda_t\vert}\right)\\&=&
\frac{1}{2}\left(1+ \frac{\log(T)}{\left(
2\alpha-1\right)\lambda_t}-\frac{1}{\left(
2\alpha-1\right)^2\lambda_t} +
o\left(\frac{1}{\lambda_t}\right)\right)
\end{eqnarray*}
as $t\uparrow T$, where the latter equality comes from the fact that $\alpha>\frac12$.\\
Thus, we can deduce
\begin{eqnarray*}
2\Vert f_t\Vert_{\mathfrak{H}^{\otimes2}}^2-1&=&
\frac{\log(T)}{\left( 2\alpha-1\right)\lambda_t}-\frac{1}{\left(
2\alpha-1\right)^2\lambda_t} + o\left(\frac{1}{\lambda_t}\right),
\end{eqnarray*}
which proves (\ref{equ-f1}). On the other hand, the estimate
(\ref{equ-f2}) is a direct consequence of (\ref{exp-b}).\\
Let us prove (\ref{equ-f3}), we have
\begin{eqnarray*}
\langle f_t\otimes_1 f_t,f_t\rangle_{\mathfrak{H}^{\otimes 2}}&=&
\int_{[0,t]^3}f_t(x_1,x_2)f_t(x_2,x_3)f_t(x_3,x_1)dx_1dx_2dx_3\\
&=&3! \int_0^t dx_3\int_0^{x_3} dx_2\int_0^{x_2} dx_1
f_t(x_1,x_2)f_t(x_2,x_3)f_t(x_3,x_1), \end{eqnarray*} where we used
the fact that the integrand   is symmetric. \\
Hence, using \eqref{kernel-f}, we get
\begin{eqnarray*}
&&\langle f_t\otimes_1 f_t,f_t\rangle_{\mathfrak{H}^{\otimes
2}}\\&=&\frac{3}{4(\lambda_t)^{3/2}}\int_0^t dx_3\int_0^{x_3}
dx_2\int_0^{x_2} dx_1\left(T-x_3 \right)^{2\alpha-2}
    \left(T-x_2\right)^{-1}\left(T-x_1 \right)^{-2\alpha}
    \\
&=&\frac{3}{4(2\alpha-1)(\lambda_t)^{3/2}}\int_0^t dx_3\int_0^{x_3}
dx_2 \left(T-x_3 \right)^{2\alpha-2}
    \left(T-x_2\right)^{-1}\left[\left(T-x_2
    \right)^{1-2\alpha}-T^{1-2\alpha}\right]
     \\
&=&\frac{3}{4(2\alpha-1)(\lambda_t)^{3/2}}\int_0^t dx_3  \left(T-x_3
\right)^{2\alpha-2}\left[
   \frac{\left(T-x_3\right)^{1-2\alpha}-T^{1-2\alpha}}{2\alpha-1}+T^{1-2\alpha}
\log\left(\frac{T-x_3}{T}\right)\right]
\\
&=&\frac{3}{4(2\alpha-1)^2(\lambda_t)^{3/2}}\left[\log(T)-\log(T-t)+T^{1-2\alpha}\frac{\left(T-t\right)^{2\alpha-1}-T^{2\alpha-1}}{2\alpha-1}
\right.\\&&\left.
-T^{1-2\alpha}\log\left(\frac{T-t}{T}\right)\left(T-t\right)^{2\alpha-1}
+T^{1-2\alpha}\left(\left(T-t\right)^{2\alpha-1}-T^{2\alpha-1}\right)
\right]
\\
&=&\frac{3}{4(2\alpha-1)\sqrt{\lambda_t}}+o\left(\frac{1}{\sqrt{\lambda_t}}\right),
\end{eqnarray*}
which proves (\ref{equ-f3}).\\
 Now let us prove (\ref{equ-f4}). By  \eqref{kernel-f}, we obtain
\begin{eqnarray}
&&\Vert  f_t \otimes_{1}   f_t
\Vert_{\mathfrak{H}^{\otimes2}}^2\nonumber
\\&=& \int_{\left[0,t \right] ^2}\left(\int_{0}^t f_t(x_1,x_2)
f_t(x_3,x_2)dx_2\right) ^2dx_1dx_3\nonumber
 \\&=& \int_{\left[0,t \right] ^4}  f_t(x_1,x_2) f_t(x_2,x_3) f_t(x_3,x_4) f_t(x_4,x_1)dx_1dx_2dx_3dx_4\nonumber
\\&=& \frac{16}{16\lambda_t^2} \int_{\{0<x_1<x_2<x_3<x_4<t\}} \left(T-x_4\right)^{2\alpha-2}\left(T-x_3\right)^{-1}\left(T-x_2\right)^{-1}
\left(T-x_1\right)^{-2\alpha}dx_1dx_2dx_3dx_4\nonumber
\\&&+\frac{8}{16\lambda_t^2}  \int_{\{0<x_1< x_3< x_2< x_4<t\}} \left(T-x_4\right)^{2\alpha-2}\left(T-x_2\right)^{2\alpha-2}
\left(T-x_3\right)^{-2\alpha}\left(T-x_1\right)^{-2\alpha}dx_1dx_3dx_2dx_4\nonumber
\\&=:&   A_{t,1}+A_{t,2}.\label{A1+A2}
\end{eqnarray}
For the term $ A_{t,1} $, we have
\begin{eqnarray}A_{t,1}&=&\frac{1}{\lambda_t^2}
\int_0^tdx_4  \int_0^{x_4}dx_3  \int_0^{x_3}dx_2  \int_0^{x_2}dx_1
\left(T-x_4\right)^{2\alpha-2}\left(T-x_3\right)^{-1}\left(T-x_2\right)^{-1}\left(T-x_1\right)^{-2\alpha}\nonumber
\\&=&
\frac{1}{(2\alpha-1)\lambda_t^2} \int_0^tdx_4  \int_0^{x_4}dx_3
\int_0^{x_3}dx_2
\left(T-x_4\right)^{2\alpha-2}\left(T-x_3\right)^{-1}\nonumber\\&&\qquad\qquad\qquad\qquad\times\left[\left(T-x_2\right)^{-2\alpha}-T^{1-2\alpha}\left(T-x_2\right)^{-1}\right]\nonumber
\\&=&
\frac{1}{(2\alpha-1)\lambda_t^2} \int_0^tdx_4  \int_0^{x_4}dx_3
\left(T-x_4\right)^{2\alpha-2}\left[\frac{\left(T-x_3\right)^{-2\alpha}-T^{1-2\alpha}\left(T-x_3\right)^{-1}}{2\alpha-1}
\right.\nonumber\\&&\left.\qquad\qquad\qquad+T^{1-2\alpha}\log\left(\frac{T-x_3}{T}\right)\left(T-x_3\right)^{-1}\right]
\nonumber\\&=& \frac{1}{(2\alpha-1)\lambda_t^2} \int_0^tdx_4\left[
\frac{\left(T-x_4\right)^{-1}-T^{1-2\alpha}(T-x_4)^{2\alpha-2}}{(2\alpha-1)^2}\right.\nonumber\\
&&\left.\quad
+\frac{T^{1-2\alpha}}{2\alpha-1}\log\left(\frac{T-x_4}{T}\right)
\left(T-x_4\right)^{2\alpha-2}
-\frac{T^{1-2\alpha}}{2}\log^2\left(\frac{T-x_4}{T}\right)\left(T-x_4\right)^{2\alpha-2}\right]\nonumber
\\&=&
\frac{1}{(2\alpha-1)^3\lambda_t^2}
\int_0^tdx_4\left(T-x_4\right)^{-1}+R_t\nonumber\\&=&
\frac{1}{(2\alpha-1)^2\lambda_t} +
o\left(\frac{1}{\lambda_t}\right),\label{A1}
\end{eqnarray}
where, using integration by parts, straightforward calculations lead
to
\begin{eqnarray*}R_t&:=&
\frac{1}{(2\alpha-1)\lambda_t^2}
\int_0^tdx_4\left[\frac{-T^{1-2\alpha}(T-x_4)^{2\alpha-2}}{(2\alpha-1)^2}+
\frac{T^{1-2\alpha}}{2\alpha-1}\log\left(\frac{T-x_4}{T}\right)
\left(T-x_4\right)^{2\alpha-2}\right.
\\&&\left.\qquad\qquad-\frac{T^{1-2\alpha}}{2}\log^2\left(\frac{T-x_4}{T}\right)\left(T-x_4\right)^{2\alpha-2}\right]\\
&=& o\left(\frac{1}{\lambda_t}\right).
\end{eqnarray*}
Similarly, we obtain
\begin{eqnarray}A_{t,2}&=&\frac{1}{2\lambda_t^2} \int_0^tdx_4  \int_0^{x_4}dx_2  \int_0^{x_2}dx_3  \int_0^{x_3}dx_1
\left(T-x_4\right)^{2\alpha-2}\left(T-x_2\right)^{2\alpha-2}
\left(T-x_3\right)^{-2\alpha}\left(T-x_1\right)^{-2\alpha}\nonumber\\
&=&\frac{1}{2(2\alpha-1)\lambda_t^2} \int_0^tdx_4  \int_0^{x_4}dx_2
\int_0^{x_2}dx_3
\left(T-x_4\right)^{2\alpha-2}\left(T-x_2\right)^{2\alpha-2}\nonumber\\&&
\qquad\qquad\qquad\qquad\qquad\qquad\times\left[\left(T-x_3\right)^{1-4\alpha}-T^{1-2\alpha}\left(T-x_3\right)^{-2\alpha}\right]\nonumber
\\
&=&\frac{1}{4(2\alpha-1)^2\lambda_t^2} \int_0^tdx_4 \int_0^{x_4}dx_2
 \left(T-x_4\right)^{2\alpha-2}\left[\left(T-x_2\right)^{-2\alpha}-T^{2-4\alpha}\left(T-x_2\right)^{2\alpha-2}\right.\nonumber\\&&
\qquad\qquad\qquad\qquad\left.
 -2T^{1-2\alpha}\left(\left(T-x_2\right)^{-1}-T^{1-2\alpha}\left(T-x_2\right)^{2\alpha-2}\right)\right]\nonumber
 \\
&=&\frac{1}{4(2\alpha-1)^2\lambda_t^2} \int_0^tdx_4 \int_0^{x_4}dx_2
 \left(T-x_4\right)^{2\alpha-2}\left[\left(T-x_2\right)^{-2\alpha}+T^{2-4\alpha}\left(T-x_2\right)^{2\alpha-2}\right.\nonumber\\&&
\qquad\qquad\qquad\qquad\qquad\left.
 -2T^{1-2\alpha}\left(T-x_2\right)^{-1}\right]\nonumber
 \\&=&\frac{1}{4(2\alpha-1)^3\lambda_t^2} \int_0^t
 \left(T-x_4\right)^{-1}dx_4+S_t\nonumber
 \\&=&\frac{1}{4(2\alpha-1)^2\lambda_t}
+ o\left(\frac{1}{\lambda_t}\right),\label{A2}
\end{eqnarray}
where, by integration by parts, it is easy to check that
\begin{eqnarray*}S_t &:=&\frac{-T^{1-2\alpha}}{4(2\alpha-1)^3\lambda_t^2} \int_0^t\left[\left(T-x_4\right)^{2\alpha-2}
-T^{1-2\alpha}\left(\left(T-x_4\right)^{4\alpha-3}+T^{2\alpha-1}\left(T-x_4\right)^{2\alpha-2}\right)\right]dx_4\\
&&+\frac{T^{1-2\alpha}}{2(2\alpha-1)^3\lambda_t^2}
\int_0^t\left(T-x_4\right)^{2\alpha-2}\log\left(\frac{T-x_4}{T}\right)dx_4
 \\&=& o\left(\frac{1}{\lambda_t}\right).
\end{eqnarray*}
Combining (\ref{A1+A2}), (\ref{A1}) and (\ref{A2}), we obtain
(\ref{equ-f4}), and therefore, the proof is complete.
\end{proof}

\begin{lemma} Suppose that $\alpha>\frac12$. Let  $\lambda_t$, $f_t$ and $g_t$    be the functions  given
by (\ref{lambda}), (\ref{kernel-f}) and (\ref{kernel-g}),
respectively. Then, for all $(T-1/e)\vee0<t<T$,
\begin{eqnarray}
\Vert
g_t\Vert_{\mathfrak{H}^{\otimes2}}&\leq&\frac{C}{\sqrt{\lambda_t}},\label{equ-g1}
\end{eqnarray}
\begin{eqnarray}\Vert  g_t \otimes_{1}   g_t \Vert_{\mathfrak{H}^{\otimes2}}&\leq&
\frac{C}{ \lambda_t^{3/2}},\label{equ-g2}
\end{eqnarray}
\begin{eqnarray} \vert\langle f_t,g_t\rangle_{\mathfrak{H}^{\otimes
2}}\vert\ &\leq&\frac{C}{\sqrt{\lambda_t}},\label{equ-g3}
\end{eqnarray}
\begin{eqnarray}
\Vert  f_t \otimes_{1}   g_t \Vert_{\mathfrak{H}^{\otimes2}} &\leq&
\frac{C}{ \lambda_t}.\label{equ-g4}
\end{eqnarray}
\end{lemma}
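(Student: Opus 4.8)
The plan is to reduce everything to quantities already controlled in Lemma 1, by observing that $g_t$ is, up to explicit scalar factors, the difference of $f_t$ and a harmless auxiliary kernel. Writing $u\vee v$, $u\wedge v$ for the max and min and using the identity $(T-u)^{-\alpha}(T-v)^{-\alpha}(T-u\vee v)^{2\alpha-1}=(T-u\vee v)^{\alpha-1}(T-u\wedge v)^{-\alpha}$ together with $|\log(T-t)|=(2\alpha-1)\lambda_t$, the kernel (\ref{kernel-g}) decomposes as
\begin{equation*}
g_t=\frac{2}{(2\alpha-1)\sqrt{\lambda_t}}\,f_t-\frac{1}{(2\alpha-1)\lambda_t}\,h_t,\qquad h_t(u,v):=(T-t)^{2\alpha-1}(T-u)^{-\alpha}(T-v)^{-\alpha}\mathds{1}_{[0,t]^2}(u,v),
\end{equation*}
with $h_t$ symmetric. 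Then each of (\ref{equ-g1})--(\ref{equ-g4}) is expanded by bilinearity of $\otimes_1$ and of $\langle\cdot,\cdot\rangle_{\mathfrak{H}^{\otimes2}}$ into terms built from $f_t\otimes_1 f_t$, $f_t\otimes_1 h_t$, $h_t\otimes_1 f_t$, $h_t\otimes_1 h_t$ (respectively $\Vert f_t\Vert^2_{\mathfrak{H}^{\otimes2}}$ and $\langle f_t,h_t\rangle_{\mathfrak{H}^{\otimes2}}$), each multiplied by an explicit negative power of $\lambda_t$.

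Before assembling, I would record the ingredients. A one-line computation gives $\Vert h_t\Vert_{\mathfrak{H}^{\otimes2}}=(T-t)^{2\alpha-1}\int_0^t(T-u)^{-2\alpha}\,du=\tfrac{1}{2\alpha-1}\bigl(1-((T-t)/T)^{2\alpha-1}\bigr)$, which is bounded by $\tfrac{1}{2\alpha-1}$ for every $t<T$ precisely because $\alpha>\tfrac12$. From Lemma 1, $\Vert f_t\Vert^2_{\mathfrak{H}^{\otimes2}}\to\tfrac12$, hence $\Vert f_t\Vert_{\mathfrak{H}^{\otimes2}}\le C$, and $\Vert f_t\otimes_1 f_t\Vert_{\mathfrak{H}^{\otimes2}}\le C/\sqrt{\lambda_t}$ by (\ref{equ-f4}). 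Next I would use the elementary contraction bound $\Vert\varphi\otimes_1\psi\Vert_{\mathfrak{H}^{\otimes2}}\le\Vert\varphi\Vert_{\mathfrak{H}^{\otimes2}}\Vert\psi\Vert_{\mathfrak{H}^{\otimes2}}$ (Cauchy--Schwarz in the contracted variable) and Cauchy--Schwarz, which show $\Vert f_t\otimes_1 h_t\Vert$, $\Vert h_t\otimes_1 f_t\Vert$, $\Vert h_t\otimes_1 h_t\Vert$ and $|\langle f_t,h_t\rangle|$ are all $\le C$ on the stated range; finally, the restriction $(T-1/e)\vee0<t<T$ gives $\lambda_t\ge\tfrac{1}{2\alpha-1}$, so $1/\lambda_t\le C/\sqrt{\lambda_t}$ there.

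Assembling via the triangle inequality: $\Vert g_t\Vert\le\frac{C}{\sqrt{\lambda_t}}\Vert f_t\Vert+\frac{C}{\lambda_t}\Vert h_t\Vert\le C/\sqrt{\lambda_t}$, giving (\ref{equ-g1}); $|\langle f_t,g_t\rangle|\le\frac{C}{\sqrt{\lambda_t}}\Vert f_t\Vert^2+\frac{C}{\lambda_t}|\langle f_t,h_t\rangle|\le C/\sqrt{\lambda_t}$, giving (\ref{equ-g3}); expanding $g_t\otimes_1 g_t$ produces a leading term $\frac{4}{(2\alpha-1)^2\lambda_t}\,f_t\otimes_1 f_t$ of norm $\le(C/\lambda_t)(C/\sqrt{\lambda_t})=C/\lambda_t^{3/2}$, two cross terms of order $\lambda_t^{-3/2}$ and a term of order $\lambda_t^{-2}$, hence (\ref{equ-g2}); and $f_t\otimes_1 g_t=\frac{2}{(2\alpha-1)\sqrt{\lambda_t}}\,f_t\otimes_1 f_t-\frac{1}{(2\alpha-1)\lambda_t}\,f_t\otimes_1 h_t$ has norm $\le\frac{C}{\sqrt{\lambda_t}}\cdot\frac{C}{\sqrt{\lambda_t}}+\frac{C}{\lambda_t}\cdot C=C/\lambda_t$, giving (\ref{equ-g4}).

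None of this is hard; the genuine content is the sharp rate $\Vert f_t\otimes_1 f_t\Vert_{\mathfrak{H}^{\otimes2}}\le C/\sqrt{\lambda_t}$ supplied by Lemma 1. The crude bound $\Vert f_t\otimes_1 f_t\Vert\le\Vert f_t\Vert^2\le C$ would only yield $\Vert g_t\otimes_1 g_t\Vert\le C/\lambda_t$ and $\Vert f_t\otimes_1 g_t\Vert\le C/\sqrt{\lambda_t}$, both too weak for (\ref{equ-g2}) and (\ref{equ-g4}); so the only real point is to make sure (\ref{equ-f4}) is invoked, and that it holds with a uniform constant on all of $((T-1/e)\vee0,T)$ --- which it does, since on compact subintervals away from $T$ the kernels are continuous and $\lambda_t$ is bounded above and below, while near $T$ it is exactly (\ref{equ-f4}).
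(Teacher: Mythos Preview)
Your argument is correct and takes a genuinely different route from the paper's. The paper works pointwise: since $\alpha>\tfrac12$ the bracket in (\ref{kernel-g}) is nonnegative and bounded above by $(T-u\vee v)^{2\alpha-1}$, so $0\le g_t\le h_t^{\text{paper}}$ with $h_t^{\text{paper}}(u,v)=\frac{(T-u)^{-\alpha}(T-v)^{-\alpha}}{|\log(T-t)|}(T-u\vee v)^{2\alpha-1}\mathds{1}_{[0,t]^2}$ (note this is a \emph{different} kernel from your $h_t$; in fact $h_t^{\text{paper}}=\tfrac{2}{(2\alpha-1)\sqrt{\lambda_t}}f_t$). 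The paper then expands the four-fold integrals for $\Vert g_t\otimes_1 g_t\Vert^2$ and $\Vert f_t\otimes_1 g_t\Vert^2$ directly and recognizes them as the same quantities $A_{t,1},A_{t,2}$ already evaluated in Lemma~1, obtaining (\ref{equ-g2}) and (\ref{equ-g4}); the remaining two estimates are done by short direct integrations.

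Your approach replaces the pointwise majorization by the exact linear decomposition $g_t=\tfrac{2}{(2\alpha-1)\sqrt{\lambda_t}}f_t-\tfrac{1}{(2\alpha-1)\lambda_t}h_t$, with $h_t$ a rank-one kernel of bounded $\mathfrak{H}^{\otimes2}$-norm, and then uses bilinearity of $\otimes_1$ together with the general Cauchy--Schwarz bound $\Vert\varphi\otimes_1\psi\Vert\le\Vert\varphi\Vert\,\Vert\psi\Vert$. This is cleaner in that no integral has to be reopened: every estimate reduces to the single input $\Vert f_t\otimes_1 f_t\Vert\le C/\sqrt{\lambda_t}$ from (\ref{equ-f4}), and your closing remark correctly identifies this as the only non-trivial ingredient. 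The paper's version, by contrast, is slightly more self-contained (it does not invoke the contraction inequality) and makes the link to $A_{t,1},A_{t,2}$ explicit. Both proofs rest on the same computation in Lemma~1; yours packages it more transparently.
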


\begin{proof}Note that if  $(T-1/e)\vee0<t<T$,
$-\log(T-t)=\vert\log(T-t)\vert$ and $\vert\log(T-t)\vert>1$. From
\eqref{kernel-g} we have
\begin{eqnarray*}
\Vert
g_t\Vert_{\mathfrak{H}^{\otimes2}}^2&=&\int_0^t\int_0^t\frac{\left(
T-u\right)^{-2\alpha}\left( T-v\right)^{-2\alpha}}{\vert\log\left(T-
t \right)\vert^2}\left[\left( T-u \vee v\right)^{2\alpha-1}-\left(
T-t\right)^{2\alpha-1}\right]^2dudv\\
&\leq&\int_0^t\int_0^t\frac{\left( T-u\right)^{-2\alpha}\left(
T-v\right)^{-2\alpha}}{\vert\log\left(T- t \right)\vert^2} \left(
T-u \vee v\right)^{4\alpha-2}dudv\\
&=&2\int_0^tdv\int_0^vdu\frac{\left( T-u\right)^{-2\alpha}\left(
T-v\right)^{2\alpha-2}}{\vert\log\left(T- t \right)\vert^2}
\\
&=&2\int_0^tdv \frac{\left(
T-v\right)^{2\alpha-2}}{(2\alpha-1)\vert\log\left(T- t
\right)\vert^2}\left[\left(
T-v\right)^{1-2\alpha}-T^{1-2\alpha}\right]
\\
&\leq&2\int_0^tdv \frac{\left(
T-v\right)^{-1}}{(2\alpha-1)\vert\log\left(T- t \right)\vert^2}
\\
&=&2 \frac{\log\left(T\right)-\log\left(T- t
\right)}{(2\alpha-1)\vert\log\left(T- t \right)\vert^2}
\\
&\leq&\frac{C}{\lambda_t},
\end{eqnarray*}
which proves (\ref{equ-g1}). \\
Now let us prove (\ref{equ-g2}).
Using \eqref{kernel-g} and the fact that $\alpha>\frac12$, we get
\begin{eqnarray}
0\leq g_t(u,v)&=& \frac{\left( T-u\right)^{-\alpha}\left(
T-v\right)^{-\alpha}}{\vert\log\left(T- t \right)\vert}\left[\left(
T-u \vee v\right)^{2\alpha-1}-\left( T-t\right)^{2\alpha-1}\right]
\mathds{1}_{\left[ 0,t\right]^2 }(u,v)\nonumber\\
&\leq&\frac{\left( T-u\right)^{-\alpha}\left(
T-v\right)^{-\alpha}}{\vert\log\left(T- t \right)\vert}\left( T-u
\vee v\right)^{2\alpha-1}
\mathds{1}_{\left[ 0,t\right]^2 }(u,v)\nonumber\\
&=:&h_t(u,v).\label{defi-h}
\end{eqnarray}
Further, notice that
\begin{eqnarray}
&& \int_{\left[0,t \right] ^4}  h_t(x_1,x_2) h_t(x_2,x_3)
h_t(x_3,x_4) h_t(x_4,x_1)dx_1dx_2dx_3dx_4\nonumber
\\&=&\frac{16}{\vert\log\left(T- t \right)\vert^4}\int_{\{0<x_1<x_2<x_3<x_4<t\}} \left(
T-x_4\right)^{2\alpha-2}\left( T-x_3\right)^{-1}\left(
T-x_2\right)^{-1}\nonumber\\&&\qquad\qquad\qquad\qquad\qquad\qquad\qquad\times \left( T-x_1\right)^{-2\alpha}dx_1dx_2dx_3dx_4\nonumber\\
&&+\frac{8}{\vert\log\left(T- t \right)\vert^4}\int_{\{0<x_1< x_3<
x_2< x_4<t\}} \left( T-x_4\right)^{2\alpha-2}\left(
T-x_2\right)^{2\alpha-2}\left(
T-x_3\right)^{-2\alpha}\nonumber\\&&\qquad\qquad\qquad\qquad\qquad\qquad\qquad\times\left(
T-x_1\right)^{-2\alpha}dx_1dx_3dx_2dx_4\nonumber
\\&=&  \frac{16}{(2\alpha-1)^4\lambda_t^2}
A_{t,1}+\frac{16}{(2\alpha-1)^4\lambda_t^2}A_{t,2}\nonumber
\\&\leq& \frac{C}{
\lambda_t^3},\label{upper-h}
\end{eqnarray}
where the latter inequality comes from (\ref{A1}) and (\ref{A2}).\\
Thus, combining \eqref{defi-h} and \eqref{upper-h}, we obtain
\begin{eqnarray*}
\Vert  g_t \otimes_{1}   g_t
\Vert_{\mathfrak{H}^{\otimes2}}^2\nonumber &=& \int_{\left[0,t
\right] ^2}\left(\int_{0}^t g_t(x_1,x_2) g_t(x_3,x_2)dx_2\right)
^2dx_1dx_3\nonumber
 \\&=& \int_{\left[0,t \right] ^4}  g_t(x_1,x_2) g_t(x_2,x_3) g_t(x_3,x_4) g_t(x_4,x_1)dx_1dx_2dx_3dx_4\nonumber
\\&\leq&\int_{\left[0,t \right] ^4}  h_t(x_1,x_2) h_t(x_2,x_3) h_t(x_3,x_4)
h_t(x_4,x_1)dx_1dx_2dx_3dx_4\\&\leq& \frac{C}{ \lambda_t^3},
\end{eqnarray*}
which implies (\ref{equ-g2}). For (\ref{equ-g3}), since $f_t$ and
$g_t$ are symmetric, we have
\begin{eqnarray*}
\vert\langle f_t,g_t\rangle_{\mathfrak{H}^{\otimes
2}}\vert&=&\int_0^t \int_0^t f_t(u,v)g_t(u,v)dudv
    \\
&=&\frac{1}{(2\alpha-1)(\lambda_t)^{3/2}}\int_0^t dv\int_0^v du
\left(T-v \right)^{-1}
    \left(T-u\right)^{-2\alpha}\left[\left(T-v
    \right)^{2\alpha-1}-\left(T-t\right)^{2\alpha-1}\right]
    \\
&\leq&\frac{1}{(2\alpha-1)(\lambda_t)^{3/2}}\int_0^t dv\left(T-v
\right)^{2\alpha-2}\int_0^v du
    \left(T-u\right)^{-2\alpha}
     \\
&\leq&\frac{1}{(2\alpha-1)^2(\lambda_t)^{3/2}}\int_0^t dv\left(T-v
\right)^{-1}\\
&\leq&\frac{C}{\sqrt{\lambda_t}}.
\end{eqnarray*}
 To finish the proof it remains to prove  the estimate (\ref{equ-g4}). It follows from \eqref{defi-h}
 that
\begin{eqnarray*}
\Vert  f_t \otimes_{1}   g_t \Vert_{\mathfrak{H}^{\otimes2}}^2 &=&
\int_{\left[0,t \right] ^2}\left(\int_{0}^t f_t(x_1,x_2)
g_t(x_3,x_2)dx_2\right) ^2dx_1dx_3
 \\&=& \int_{\left[0,t \right] ^4}  f_t(x_1,x_2) g_t(x_2,x_3) g_t(x_3,x_4) f_t(x_4,x_1)dx_1dx_2dx_3dx_4
 \\&\leq& \int_{\left[0,t \right] ^4}  f_t(x_1,x_2) h_t(x_2,x_3) h_t(x_3,x_4) f_t(x_4,x_1)dx_1dx_2dx_3dx_4
\end{eqnarray*}
Moreover, we notice that
\begin{eqnarray*}&&\int_{\left[0,t \right] ^4}  f_t(x_1,x_2) h_t(x_2,x_3) h_t(x_3,x_4) f_t(x_4,x_1)dx_1dx_2dx_3dx_4\\
&=& \frac{16}{4(2\alpha-1)^2\lambda_t^3}
\int_{\{0<x_1<x_2<x_3<x_4<t\}}
\left(T-x_1\right)^{-2\alpha}\left(T-x_2\right)^{-1}\left(T-x_3\right)^{-1}\\
&& \qquad\qquad\qquad\qquad\qquad\qquad\qquad
\times\left(T-x_4\right)^{2\alpha-2}dx_1dx_2dx_3dx_4
\\&&+\frac{8}{4(2\alpha-1)^2\lambda_t^3}
\int_{\{0<x_1< x_3< x_2< x_4<t\}}
\left(T-x_1\right)^{-2\alpha}\left(T-x_3\right)^{-2\alpha}
\left(T-x_2\right)^{2\alpha-2}\\
&& \qquad\qquad\qquad\qquad\qquad\qquad\qquad
\times\left(T-x_4\right)^{2\alpha-2}dx_1dx_3dx_2dx_4
\\&=&   \frac{4}{(2\alpha-1)^2\lambda_t}A_{t,1}+\frac{4}{(2\alpha-1)^2\lambda_t}A_{t,2}
\\&\leq& \frac{C}{
\lambda_t^2},
\end{eqnarray*}
where the latter inequality follows from (\ref{A1}) and (\ref{A2}).
\end{proof}

  Now we are ready to state and prove our main result. In the next Theorem we give an explicit optimal bound for the Kolmogorov distance,
   between the law of $ \sqrt{\frac{\vert\log\left( T-t\right)\vert}{2\alpha-1}}\left( \alpha-\widetilde{\alpha}_t\right) $
    and the standard normal law.
\begin{theorem}
Let $ T>0 $, $\alpha>1/2$, and let $ \widetilde{\alpha}_t$ be the
MLE given in (\ref{MLE}).Then there exist constants $0 < c < C <
\infty$, depending only on $\alpha$ and $T$, such that for all $t$
sufficiently near $T$,
\begin{eqnarray*}
\frac{c}{\sqrt{\vert\log\left( T-t\right)\vert}}\leq\sup_{z\in
\mathbb{R}}\left\vert \mathbb{P}\left(\sqrt{\frac{\vert\log\left(
T-t\right)\vert}{2\alpha-1}}\left(
\alpha-\widetilde{\alpha}_t\right)\leq z\right)-\mathbb{P}\left(
Z\leq z\right)\right\vert\leq \frac{C}{\sqrt{\vert\log\left(
T-t\right)\vert}}.
\end{eqnarray*}
\end{theorem}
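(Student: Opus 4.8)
The plan is to plug the representation (\ref{MLE-fraction}), i.e. $\sqrt{\lambda_t}\,(\alpha-\widetilde{\alpha}_t)=I_2(f_t)/(I_2(g_t)+b_t)$, into the two halves of the abstract Berry--Ess\'een result recalled in Section~2, with $f_t,g_t\in\mathfrak{H}^{\odot2}$ given by (\ref{kernel-f})--(\ref{kernel-g}) and $b_t$ by (\ref{exp-b}). The positivity hypothesis $I_2(g_t)+b_t>0$ almost surely required by both (\ref{kp}) and (\ref{kp2}) is automatic, since $I_2(g_t)+b_t=\lambda_t^{-1}\int_0^tX_s^2(T-s)^{-2}\,ds>0$ a.s.\ by (\ref{denominator}). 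Moreover $b_t\to1$ by (\ref{exp-b}), so the factors $b_t^{-2}$ in $\psi_1,\psi_2,\psi_3$ are bounded, and since $\lambda_t=|\log(T-t)|/(2\alpha-1)$ by (\ref{lambda}), the rate $1/\sqrt{\lambda_t}$ coincides with $1/\sqrt{|\log(T-t)|}$ up to a constant depending only on $\alpha$. Thus it suffices to show the left-hand sides of (\ref{kp}) and (\ref{kp2}) are comparable to $1/\sqrt{\lambda_t}$.

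\textbf{Upper bound.} First I would bound the three quantities from part (a). For $\psi_1$, write $b_t^2-2\Vert f_t\Vert_{\mathfrak{H}^{\otimes2}}^2=(b_t^2-1)-(2\Vert f_t\Vert_{\mathfrak{H}^{\otimes2}}^2-1)$, which is $O(1/\lambda_t)$ by (\ref{equ-f1})--(\ref{equ-f2}), while $\Vert f_t\otimes_1 f_t\Vert_{\mathfrak{H}^{\otimes2}}^2=O(1/\lambda_t)$ by (\ref{equ-f4}); hence the square root in $\psi_1$ is $O(1/\sqrt{\lambda_t})$. For $\psi_2$, Lemma~2, (\ref{equ-g3})--(\ref{equ-g4}), gives $\langle f_t,g_t\rangle_{\mathfrak{H}^{\otimes2}}^2=O(1/\lambda_t)$ and $\Vert f_t\otimes_1 g_t\Vert_{\mathfrak{H}^{\otimes2}}=O(1/\lambda_t)$, so $\psi_2(t)=O(1/\sqrt{\lambda_t})$. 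For $\psi_3$, (\ref{equ-g1})--(\ref{equ-g2}) give $\Vert g_t\Vert_{\mathfrak{H}^{\otimes2}}^4=O(1/\lambda_t^2)$ and $\Vert g_t\otimes_1 g_t\Vert_{\mathfrak{H}^{\otimes2}}^2=O(1/\lambda_t^3)$, so $\psi_3(t)=O(1/\lambda_t)$. In particular $\max_{i}\psi_i(t)\to0$, so (\ref{kp}) applies and yields the Kolmogorov distance $\le C\max_i\psi_i(t)\le C/\sqrt{\lambda_t}$, which is the stated upper bound.

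\textbf{Lower bound.} Next I would check the three hypotheses of part (b) for $f_t$. The first, $\Vert f_t\otimes_1 f_t\Vert_{\mathfrak{H}^{\otimes2}}\to0$, is immediate from (\ref{equ-f4}). For the second, (\ref{equ-f1}) gives $2\Vert f_t\Vert_{\mathfrak{H}^{\otimes2}}^2-1=O(1/\lambda_t)$ whereas (\ref{equ-f3}) gives $\langle f_t\otimes_1 f_t,f_t\rangle_{\mathfrak{H}^{\otimes2}}\sim\frac{3}{4(2\alpha-1)\sqrt{\lambda_t}}$, so the quotient is $O(1/\sqrt{\lambda_t})\to0$. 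For the third, combining (\ref{equ-f3}) and (\ref{equ-f4}) gives $\Vert f_t\otimes_1 f_t\Vert_{\mathfrak{H}^{\otimes2}}\sim\frac{\sqrt5}{2(2\alpha-1)\sqrt{\lambda_t}}$, hence $\Vert f_t\otimes_1 f_t\Vert_{\mathfrak{H}^{\otimes2}}/\langle f_t\otimes_1 f_t,f_t\rangle_{\mathfrak{H}^{\otimes2}}\to\frac{2\sqrt5}{3}=:\rho\neq0$. So (\ref{kp2}) applies and gives, for $t$ near $T$, the Kolmogorov distance $\ge c\,|\langle f_t\otimes_1 f_t,f_t\rangle_{\mathfrak{H}^{\otimes2}}|\ge c'/\sqrt{\lambda_t}$, the stated lower bound. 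Rewriting $\lambda_t$ via (\ref{lambda}) in both estimates finishes the proof.

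\textbf{Main obstacle.} Essentially all the real work is already encapsulated in Lemmas~1 and~2; granting those, the theorem is pure bookkeeping. The one delicate point is the third hypothesis of part (b): one needs the \emph{exact} leading constants $\tfrac34$ in (\ref{equ-f3}) and $\tfrac54$ in (\ref{equ-f4}) (not merely order bounds) to conclude that the limit $\rho$ is nonzero, which is precisely what prevents the lower bound from degenerating and hence what makes the rate $1/\sqrt{|\log(T-t)|}$ \emph{optimal} rather than just an upper bound.
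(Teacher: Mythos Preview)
Your proposal is correct and follows essentially the same approach as the paper's own proof: invoke the representation (\ref{MLE-fraction}), feed Lemmas~1 and~2 into (\ref{kp}) for the upper bound and into (\ref{kp2}) for the lower bound, and identify the limit $\rho=2\sqrt5/3$. The only minor differences are cosmetic---you make the positivity hypothesis and the boundedness of $b_t^{-2}$ explicit, and your estimate $\psi_3(t)=O(1/\lambda_t)$ is slightly sharper than the paper's $O(1/\lambda_t^{3/4})$, but neither affects the argument.
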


\begin{proof}According to (\ref{MLE-fraction}) we have
 \begin{eqnarray*}
\sqrt{\frac{\vert\log\left( T-t\right)\vert}{2\alpha-1}}\left(
\alpha-\widetilde{\alpha}_t\right)= \frac{I_2(f_t)}{I_2(g_t)+b_t},
\end{eqnarray*} where $f_t$,  $g_t$ and $b_t$ are given by
(\ref{kernel-f}), (\ref{kernel-g}) and (\ref{exp-b}), respectively.
Let us first show that an upper bound in Kolmogorov distance for a
normal approximation of MLE $\widetilde{\alpha}_t$ is given by the
rate $\frac{1}{\sqrt{\vert\log\left( T-t\right)\vert}}$. Applying
\eqref{kp}, it suffices to prove that
 \begin{eqnarray}\max_{i=1,2,3}
\psi_i(t)\leq \frac{C}{\sqrt{\vert\log\left(
T-t\right)\vert}}.\label{upper-thm}
 \end{eqnarray}
 Using (\ref{equ-f1}), (\ref{equ-f2}) and (\ref{equ-f4}),
we obtain $ \psi_1(t)\leq \frac{C}{\sqrt{\lambda_t}}\leq
\frac{C}{\sqrt{\vert\log\left( T-t\right)\vert}}$.   On the other
hand, by combining (\ref{equ-g3})  and (\ref{equ-g4}), we get $
\psi_2(t)\leq \frac{C}{\sqrt{\lambda_t}}\leq
\frac{C}{\sqrt{\vert\log\left( T-t\right)\vert}}$. Further, the
estimates (\ref{equ-g1})  and (\ref{equ-g2}) imply that $
\psi_3(t)\leq \frac{C}{\lambda_t^{3/4}}\leq
\frac{C}{\sqrt{\vert\log\left( T-t\right)\vert}}$. Therefore,
(\ref{upper-thm}) is obtained. \\
For the lower bound, combining  (\ref{equ-f1}), (\ref{equ-f3}) and
(\ref{equ-f4}) together with the fact that $2\alpha-1>0$ and
$\lambda_t\longrightarrow\infty$ as $t\uparrow T$, we obtain, as
$t\uparrow T$,
\begin{eqnarray*}
\Vert f_t\otimes_1 f_t\Vert_{\mathfrak{H}^{\otimes
2}}=\frac{\sqrt{5}}{2(2\alpha-1)\sqrt{\lambda_t}} +
o\left(\frac{1}{\sqrt{\lambda_t}}\right)\longrightarrow0,
\end{eqnarray*}
\begin{eqnarray*}
 \frac{2\Vert f_t\Vert^2_{\mathfrak{H}^{\otimes
2}}-1}{\langle f_t\otimes_1 f_t,f_t\rangle_{\mathfrak{H}^{\otimes
2}}}=\frac{\frac{\log(T)}{\left(
2\alpha-1\right)\sqrt{\lambda_t}}-\frac{1}{\left(
2\alpha-1\right)^2\sqrt{\lambda_t}} +
o\left(\frac{1}{\sqrt{\lambda_t}}\right)}{\frac{3}{4(2\alpha-1)}+o\left(1\right)}\longrightarrow0,
\end{eqnarray*}
\begin{eqnarray*} \frac{\Vert f_t\otimes_1
f_t\Vert_{\mathfrak{H}^{\otimes 2}}}{\langle f_t\otimes_1
f_t,f_t\rangle_{\mathfrak{H}^{\otimes
2}}}=\frac{\frac{\sqrt{5}}{2}+o\left(1\right)}{\frac34+o\left(1\right)}\longrightarrow
\frac{2\sqrt{5}}{3}\neq0.
\end{eqnarray*}
Moreover, using (\ref{equ-f3}), there is $ c> 0$, depending only on
$\alpha$ and $T$, such that for all $t$ sufficiently near $T$,
\begin{eqnarray*}
\vert\langle f_t\otimes_1 f_t,f_t\rangle_{\mathfrak{H}^{\otimes
2}}\vert=\frac{1}{\sqrt{\lambda_t}}\left\vert\frac{3}{4(2\alpha-1)}+o\left(1\right)\right\vert\geq
\frac{c}{\sqrt{\vert\log\left( T-t\right)\vert}}.
\end{eqnarray*}
Therefore, applying   \eqref{kp2}, the desired result is obtained.
\end{proof}

\renewcommand\bibname{\large \bf References}


\begin{thebibliography}{9}

\bibitem[\protect\citeauthoryear{Barczy and Pap}{Barczy and Pap}{2010}]{BP}
Barczy M. and  Pap G. (2010).
 \newblock  Asymptotic behavior of maximum likelihood estimator for time inhomogeneous diffusion processes.
\newblock  {\em J. Statist.} Plan. Infer. 140, no. 6, 1576-1593.

\bibitem[\protect\citeauthoryear{Bishwal}{Bishwal}{2000}]{bishwal} Bishwal, J. P. N. (2000).\newblock Sharp Berry-Esseen bound for the maximum
likelihood estimators in the Ornstein-Uhlenbeck process.
\newblock  {\em Sankhy\={a}: The Indian Journal of Statistics}, Series A, 62, 1-10.




\bibitem[\protect\citeauthoryear{Brennan and Schwartz}{Brennan and Schwartz}{1990}]{BS}Brennan, M. J. and Schwartz, E. S. (1990). \newblock
Arbitrage in stock index futures. \newblock  {\em The Journal of
Business}, 63(1), S7-S31.


\bibitem[\protect\citeauthoryear{Es-Sebaiy and Nourdin}{Es-Sebaiy and Nourdin}{2013}]{EN} Es-Sebaiy K. and  Nourdin I. (2013).
\newblock   Parameter Estimation for
$\alpha$-Fractional Bridges. \newblock  {\em  In: Viens F., Feng J.,
Hu Y., Nualart E. (eds) Malliavin Calculus and Stochastic Analysis.
Springer Proceedings in Mathematics and Statistics}, vol 34.
Springer, Boston, MA.

\bibitem[\protect\citeauthoryear{G\"orgens and Thulin}{G\"orgens and Thulin}{2014}]{GT}G\"orgens, M. and Thulin, M. (2014).
 \newblock  Bias-correction of the maximum
likelihood estimator for the $\alpha$-Brownian bridge. \newblock
{\em Statistics and Probability Letters}, 93, 78-86.

\bibitem[\protect\citeauthoryear{Kim and Park}{Kim and Park}{2017a}]{KP-KJSS}Kim, Y. T. and Park, H. S. (2017a).
 \newblock Optimal Berry-Esseen bound for an
estimator of parameter in the Ornstein-Uhlenbeck process. \newblock
{\em Journal of the Korean Statistical Society}, 46(3), 413-425.

\bibitem[\protect\citeauthoryear{Kim and Park}{Kim and Park}{2017b}]{kp-JVA}Kim, Y. T. and  Park, H. S. (2017b). \newblock Optimal Berry-Esseen bound for
statistical estimations and its application to SPDE. \newblock  {\em
Journal of Multivariate Analysis}, 155, 284-304.


\bibitem[\protect\citeauthoryear{Mansuy}{Mansuy}{2004}]{Mansuy}
Mansuy, R. (2004).
 \newblock  On a one-parameter generalization of the Brownian bridge and associated quadratic functionals.
\newblock  {\em J. Theoret. Probab.} 17, no. 4, 1021-1029.



\bibitem[\protect\citeauthoryear{Nourdin and Peccati}{Nourdin and Peccati}{2012}]{NP-book} Nourdin, I. and Peccati, G. (2012).
\newblock Normal approximations with Malliavin calculus : from Stein's method to universality.
   \newblock  {\em  Cambridge Tracts in Mathematics 192. Cambridge University Press,
    Cambridge.}

\bibitem[\protect\citeauthoryear{Nualart}{Nualart}{2006}]{nualart-book} Nualart, D. (2006). \newblock The Malliavin calculus and
related topics.    \newblock  {\em  Springer-Verlag, Berlin.}

  \end{thebibliography}
  \end{document}